\newtheorem{theorem}{Theorem}[section]
\newtheorem{lemma}[theorem]{Lemma}
\newtheorem{corollary}[theorem]{Corollary}
\theoremstyle{definition}
\newtheorem{example}[theorem]{Example}
\newtheorem{remark}[theorem]{Remark}
\newtheorem{question}[theorem]{Question}
\newcommand{\ZZ}{\mathbb{Z}}
\DeclarePairedDelimiterX\set[1]{\lbrace}{\rbrace}{\def\given{\;\delimsize\vert\;}#1}
\DeclarePairedDelimiter{\floor}{\lfloor}{\rfloor}
\DeclarePairedDelimiter{\abs}{\lvert}{\rvert}
\newcommand{\card}[1]{\##1}
\newcommand{\length}{\ell}
\newcommand{\glength}{\length_g}
\newcommand{\sP}{\mathcal{P}}
\newcommand{\magma}{\textsc{Magma}}
\DeclareMathOperator{\Cay}{\Gamma}
\begin{document}

\title{An exploration of {Nathanson's} $g$-adic representations of integers} 
\author[G.~Bell]{Greg Bell}
\address{G.~Bell, Department of Mathematics and Statistics, The University of
  North Carolina at Greensboro, Greensboro, NC 27402, USA}  
\email{gcbell@uncg.edu}
\urladdr{\url{http://www.uncg.edu/~gcbell/}} 
\author[A.~Lawson]{Austin Lawson}
\address{A.~Lawson, Department of Mathematics and Statistics, The University of
  North Carolina at Greensboro, Greensboro, NC 27402, USA}  
\email{azlawson@uncg.edu}
\urladdr{\url{http://www.uncg.edu/~azlawson/}} 

\author[C.~Pritchard]{ Neil Pritchard}
\address{C.~Pritchard, Department of Mathematics and Statistics, The University
  of North Carolina at Greensboro, Greensboro, NC 27402, USA}  
\email{cnpritch@uncg.edu}
\urladdr{\url{http://www.uncg.edu/~cnpritch/}} 

\author[D.~Yasaki]{Dan Yasaki}
\address{D.~Yasaki, Department of Mathematics and Statistics, The University of
  North Carolina at Greensboro, Greensboro, NC 27402, USA}  
\email{d\_yasaki@uncg.edu}
\urladdr{\url{http://www.uncg.edu/~d_yasaki/}}

\begin{abstract}
We use Nathanson's $g$-adic representation of integers to relate
metric properties of Cayley graphs of the integers with respect to
various infinite generating sets $S$ to problems in additive number
theory.  If $S$ consists of all powers of a fixed integer $g$, we find
explicit formulas for the smallest positive integer 
of a given length.  This is related to finding the smallest positive
integer expressible as a fixed number of sums and differences of
powers of $g$.  We also consider $S$ to be the set of all powers of
all primes and bound the diameter of Cayley graph by relating it to
Goldbach's conjecture.
\end{abstract}
\subjclass[2010]{Primary 11B13, 11P81; Secondary 20F65}
\keywords{$g$-adic representation, Cayley graph}

\maketitle

\section{Introduction}
Fix an integer $g \geq 2$.   Nathanson \cite{nathanson-gadic}
 introduces a $g$-adic representation of the integers $\ZZ$ in his investigation
 of number theoretic analogues of nets in metric geometry.  The
 $g$-adic representation of an integer provides a method of computing its length
 in a metric depending on $g$.
 Nathanson \cite{nathanson-diameter} considers more general sets of integers to
 investigate questions about the finiteness of the diameter of $\ZZ$.  In 
 this note, we continue these investigation in two different directions.  First,
 we find a formula for the smallest positive integer 
 with a given length (Theorems~\ref{thm:lambda-odd} and
 \ref{thm:lambda-even}).  Next, in Section~\ref{sec:CP} we consider a specific 
 set of integers constructed from primes.  We prove that $\ZZ$ has diameter $3$ or $4$ in the
 corresponding metric (Theorem~\ref{thm:diameter}), and it is $3$ assuming
 Goldbach's Conjecture. 

\section{Cayley graphs of the integers}
Let $G$ be a group.  Fix a generating set $S$ for $G$.  Then we can
construct a graph $\Gamma=\Gamma(G,S)$, known as the \emph{Cayley graph
corresponding to $G$ and $S$}, by taking 
the vertices of $\Gamma$ to be the elements of $G$ and connecting any two
vertices $g$ and $h$ by an edge whenever $gs=h$ for some element $s$ in $S$. We
view the graph $\Gamma$ as a metric space by setting the length of each
edge to be $1$ and taking the shortest-path metric on $\Gamma$; this procedure turns the
algebraic object $G$ into a geometric object $\Gamma$. 
In this paper, we investigate different choices of infinite
generating sets $S$ when $G=\mathbb{Z}$.

We are primarily interested in generating sets that are closed under additive
inverses and are closed under taking powers. The simplest such generating set is
the collection 
\[S_g=\set{\pm 1,\pm g,\pm g^2,\pm g^3,\ldots}.\] 
We denote the Cayley graph $\Gamma(\mathbb{Z},S_g)$ by $C_g$. Edges in the graph
$C_g$ connect each vertex 
to infinitely many other vertices, see Figure~\ref{fig:balls}.
More generally, let $P$ be a subset of positive integers, and consider
generating sets of the form 
\[S_P=\bigcup_{g\in P}S_g.\] 
Let $C_P = \Gamma(\ZZ,S_P)$ denote the corresponding Cayley graph.  

The study of these graphs leads to an interesting interplay
between the geometry of the graph and problems in additive number theory. For
example, in $C_2$ we can ask for 
the value of the smallest $n>0$ (in the usual ordering of the integers) at
distance $d$ from $0$.  This is related to the problem of finding the smallest
integer that can be expressed as sums and differences of exactly $d$ powers of
$2$.  Looking 
at Figure~\ref{fig:balls}, one can see that $3$ 
is the smallest positive number that is at distance $2$ from $0$, since $3 = 2^0
+ 2^1$; extrapolating
this figure further, one can verify that $11$ is the smallest positive integer
at distance $3$ from $0$, since $11 = 2^0 + 2^1 + 2^3$.  We investigate this
problem in general for $g > 1$ in Section~\ref{sec:Cg}.  

In Section~\ref{sec:CP}, we investigate $C_P$ for more general subsets $P$ of
positive integers.  For such graphs, questions about the
diameter are already interesting and difficult
\cite{nathanson-diameter, vasilevska-exploring}.  
We use a covering congruences result of Cohen and Selfridge
\cite[Theorem~2]{cohen-selfridge} together with Helfgott's proof
\cite[Main Theorem]{helfgott} of the ternary Goldbach's conjecture to show  
that when $P$ is the set of all primes, the diameter of $C_P$  
 is either $3$ or $4$; moreover if Goldbach's conjecture holds,
it is $3$. We also conduct numerical investigations to narrow the search for the
smallest positive length-$3$ integer in this graph, refining results of Cohen
and Selfridge \cite{cohen-selfridge} and Sun \cite{sun-2000}.

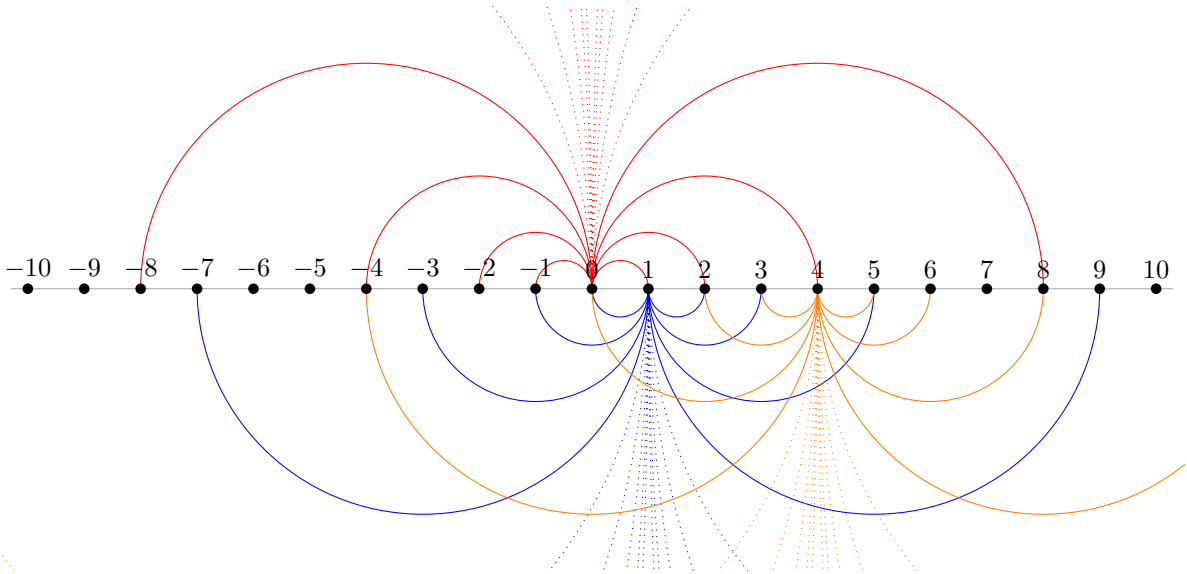
\begin{figure}
\begin{tikzpicture}[x=.75cm, y=.75cm]
\clip (-10.5,-5) rectangle (10.5,5);
\draw [gray!70, very thin](-10.3,0)--(10.3,0); 
%
\foreach \x in {1,2,4,8}{
\draw[color=red] (\x,0) arc [start angle=0, end angle = 180, radius =.5*abs(\x)];};
\foreach \x in {-1,-2,-4,-8}{
\draw[color=red] (0,0) arc [start angle=0, end angle = 180, radius =.5*abs(\x)];};
\foreach \x in {16,32,64,128,256}{
\draw[color=red, dotted] (0,0) arc [start angle=0, end angle = 180, radius =.5*\x];};
\foreach \x in {16,32,64,128,256}{
\draw[color=red, dotted] (\x,0) arc [start angle=0, end angle = 180, radius =.5*\x];};
%
\foreach \x in {1,2,4,8}{
\draw[color=blue] (1,0) arc [start angle=180, end angle = 360, radius =.5*abs(\x)];};
\foreach \x in {-1,-2,-4,-8}{
\draw[color=blue] (\x+1,0) arc [start angle=180, end angle = 360, radius =.5*abs(\x)];};
\foreach \x in {16,32,64,128,256}{
\draw[color=blue, dotted] (1,0) arc [start angle=180, end angle = 360, radius =.5*\x];};
\foreach \x in {-16,-32,-64,-128,-256}{
\draw[color=blue, dotted] (\x+1,0) arc [start angle=180, end angle = 360, radius =-.5*\x];};
%
\foreach \x in {1,2,4,8}{
\draw[color=orange] (4,0) arc [start angle=180, end angle = 360, radius =.5*abs(\x)];};
\foreach \x in {-1,-2,-4,-8}{
\draw[color=orange] (\x+4,0) arc [start angle=180, end angle = 360, radius =.5*abs(\x)];};
\foreach \x in {16,32,64,128,256}{
\draw[color=orange, dotted] (4,0) arc [start angle=180, end angle = 360, radius =.5*\x];};
\foreach \x in {-16,-32,-64,-128,-256}{
\draw[color=orange, dotted] (\x+4,0) arc [start angle=180, end angle = 360, radius =-.5*\x];};
%
\foreach \t in {-10,-9,-8,-7,-6,-5,-4,-3,-2,-1,0,1,2,3,4,5,6,7,8,9,10}
\fill [shift={(\t,0)}] circle (2pt) (0,0) node [above] {\footnotesize$\t$};
\end{tikzpicture}
\caption{Some edges emanating from $0$ in red, from $1$ in blue, and from $4$ in
  orange in the graph $C_2$. Each vertex is incident with infinitely many
  edges. Observe that the distance from $0$ to $3$ is $2$.} 
\label{fig:balls}
\end{figure}

\section{Metric properties of \texorpdfstring{$C_g$}{Cg}} \label{sec:Cg}
Let $g>0$ be an integer, and let $C_g = \Cay (\ZZ,S_g)$ be the
Cayley graph of $\ZZ$ with the generating set $S_g=\set{\pm g^i\given i\in \ZZ_{\ge
  0}}$.  Let $d_g = d_{S_g}$ denote the corresponding edge-length metric. We
denote the distance $d_g(0,n)$ by $\ell_g(n)$ and refer to this as the
\emph{$g$-length} of $n$. 

The following theorems of Nathanson \cite{nathanson-gadic} give a method of
computing $g$-length in $C_g$. 

\begin{theorem}[{\cite[Theorem 6]{nathanson-gadic}}] \label{thm:gadic-odd}
Let $g$ be an odd integer, $g \geq 3$.  Every integer $n$ has a unique
representation in the form  
\[n = \sum_{i = 0}^\infty \epsilon_i g^i\]
such that 
\begin{enumerate}
\item  $\epsilon_i \in \set{0, \pm 1, \pm 2, \dots, \pm (g-1)/2}$ for all
  nonnegative integers $i$, 
\item $\epsilon_i  \neq 0$ for only finitely many nonnegative integers $i$.
\end{enumerate}
Moreover, $n$ has $g$-length 
\[\glength(n) = \sum_{i = 0}^\infty\ \abs{\epsilon_i}.\]
\end{theorem}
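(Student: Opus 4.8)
The plan is to split the statement into two essentially independent assertions — the existence and uniqueness of the balanced representation, and the formula for the $g$-length — and to handle them in that order. Write $D=\set{0,\pm1,\dots,\pm(g-1)/2}$ for the digit set. The first observation, and the only place the hypothesis that $g$ is odd is genuinely used, is that $D$ has exactly $g$ elements and forms a complete residue system modulo $g$. For existence I would run the division algorithm with balanced remainders: given $n$, let $\epsilon_0$ be the unique element of $D$ congruent to $n$ modulo $g$, set $n'=(n-\epsilon_0)/g\in\ZZ$, and iterate. This terminates because $\abs{n'}\le(\abs{n}+(g-1)/2)/g<\abs{n}$ whenever $\abs{n}\ge 1$, so the absolute values strictly decrease to $0$ and all later digits vanish; this also gives condition (2). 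Uniqueness follows by the standard descent: if $\sum\epsilon_i g^i=\sum\delta_i g^i$, reducing modulo $g$ forces $\epsilon_0=\delta_0$ since both lie in the complete residue system $D$, and dividing by $g$ and recursing gives $\epsilon_i=\delta_i$ for all $i$.

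For the length formula, the upper bound $\glength(n)\le\sum_i\abs{\epsilon_i}$ is immediate: the balanced representation exhibits $n$ as a sum of $\sum_i\abs{\epsilon_i}$ generators, namely $\abs{\epsilon_i}$ copies of $\sgn(\epsilon_i)g^i$ for each $i$. For the lower bound, note that any path from $0$ to $n$ in $C_g$ of length $k$ writes $n$ as a sum of $k$ signed powers of $g$; collecting terms with a common exponent yields an integer-coefficient representation $n=\sum_j c_j g^j$ with $\sum_j\abs{c_j}\le k$. It therefore suffices to prove that the balanced representation minimizes the coefficient $\ell^1$-norm, i.e.\ that $\sum_j\abs{c_j}\ge\sum_i\abs{\epsilon_i}$ for \emph{every} integer-coefficient representation of $n$.

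I would establish this minimality by strong induction on $\abs{n}$. Fix an arbitrary representation $n=\sum_j c_j g^j$. Since $c_0\equiv n\equiv\epsilon_0\pmod g$, write $c_0=\epsilon_0+gt$; then $n'=(n-\epsilon_0)/g$ carries the integer-coefficient representation with coefficients $c_0'=t+c_1$ and $c_j'=c_{j+1}$ for $j\ge 1$, and by the uniqueness already proved its balanced digits are $\epsilon_1,\epsilon_2,\dots$. As $\abs{n'}<\abs{n}$, the inductive hypothesis gives $\sum_j\abs{c_j'}\ge\sum_{i\ge1}\abs{\epsilon_i}$. Because all coefficients of exponent $\ge 2$ are unchanged, the two totals differ only in the bottom two terms:
\[ \sum_j\abs{c_j}-\sum_j\abs{c_j'}=\abs{\epsilon_0+gt}+\abs{c_1}-\abs{t+c_1}. \]
Thus everything reduces to the single numerical inequality $\abs{\epsilon_0+gt}+\abs{c_1}-\abs{t+c_1}\ge\abs{\epsilon_0}$, which is where the real content lies and which I expect to be the main obstacle. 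I would dispatch it by cases: for $t=0$ it is an equality, and for $\abs{t}\ge 1$ I would combine $\abs{\epsilon_0+gt}\ge g\abs{t}-\abs{\epsilon_0}$ with the reverse triangle inequality $\abs{c_1}-\abs{t+c_1}\ge-\abs{t}$ to obtain a lower bound of $(g-1)\abs{t}-\abs{\epsilon_0}\ge(g-1)-\abs{\epsilon_0}\ge(g-1)/2\ge\abs{\epsilon_0}$, the last two steps using $\abs{t}\ge1$ and $\abs{\epsilon_0}\le(g-1)/2$. Feeding this back gives $\sum_j\abs{c_j}\ge\abs{\epsilon_0}+\sum_{i\ge1}\abs{\epsilon_i}=\sum_i\abs{\epsilon_i}$, which completes the lower bound and hence the theorem.
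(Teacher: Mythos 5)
This theorem is not proved in the paper at all: it is quoted verbatim from Nathanson \cite[Theorem~6]{nathanson-gadic} and used as a black box, so there is no internal proof to compare against. Your argument, however, is a correct and complete self-contained proof. The existence and uniqueness half is the standard balanced-digit division algorithm, and your termination estimate $\abs{n'}\le(\abs{n}+(g-1)/2)/g<\abs{n}$ for $\abs{n}\ge 1$ is exactly right (it is equivalent to $\abs{n}>1/2$). The genuinely substantive part is the length formula, and you have correctly identified that the graph distance equals the minimum of $\sum_j\abs{c_j}$ over \emph{all} integer-coefficient representations $n=\sum_j c_j g^j$, so that the theorem amounts to an $\ell^1$-minimality property of the balanced representation. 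Your induction on $\abs{n}$ closes correctly: writing $c_0=\epsilon_0+gt$, the reduction to the scalar inequality $\abs{\epsilon_0+gt}+\abs{c_1}-\abs{t+c_1}\ge\abs{\epsilon_0}$ is algebraically exact, and your case analysis ($t=0$ gives equality; $\abs{t}\ge 1$ gives the chain $(g-1)\abs{t}-\abs{\epsilon_0}\ge(g-1)-\abs{\epsilon_0}\ge(g-1)/2\ge\abs{\epsilon_0}$) is valid precisely because the digits are bounded by $(g-1)/2$ — this is where oddness of $g$ and the balanced digit set do their work, and it is also why the same argument fails for even $g$ (Theorem~\ref{thm:gadic-even} needs the extra alternation condition). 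One cosmetic remark: you never state the base case $n=0$ of the induction, though it is trivial since all balanced digits vanish; a referee would want that line added.
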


\begin{theorem}[{\cite[Theorem 3]{nathanson-gadic}}] \label{thm:gadic-even}
Let $g$ be an even positive integer.  Every integer $n$ has a unique
representation in the form  
\[n = \sum_{i = 0}^\infty \epsilon_i g^i\]
such that 
\begin{enumerate}
\item  $\epsilon_i \in \set{0, \pm 1, \pm 2, \dots, \pm \frac{g}{2}}$ for all nonnegative integers $i$,
\item $\epsilon_i  \neq 0$ for only finitely many nonnegative integers $i$, 
\item if $\abs{\epsilon_i}  = \frac{g}{2}$, then $\abs{\epsilon_{i + 1}} < \frac{g}{2}$ and
  $\epsilon_i \epsilon_{i + 1} \geq 0$. 
\end{enumerate}
Moreover, $n$ has $g$-length 
\[\glength(n) = \sum_{i = 0}^\infty \abs{\epsilon_i}.\]
\end{theorem}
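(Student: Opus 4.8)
The plan is to prove the three assertions of the theorem---existence of the representation, its uniqueness, and the length formula---in that order. Throughout, for a representation $\sum_i \epsilon_i g^i$ of $n$ satisfying conditions (1)--(3) write $w(n) = \sum_i \abs{\epsilon_i}$ for its digit sum; the point of the final step is to identify $w(n)$ with $\glength(n)$.

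For existence I would induct on $\abs n$. Given $n \neq 0$, the class of $n$ modulo $g$ has a unique representative $\epsilon_0$ in $\set{-g/2,\dots,g/2}$ \emph{unless} $n \equiv g/2 \pmod g$, in which case both $g/2$ and $-g/2$ are admissible. Setting $m = (n-\epsilon_0)/g$ and applying the inductive hypothesis to $m$ (which has strictly smaller absolute value) produces the remaining digits $\epsilon_1,\epsilon_2,\dots$. The only real work is to arrange condition (3): when $\abs{\epsilon_0} = g/2$ I would use the freedom in the sign of $\epsilon_0$---equivalently the choice between $m$ and $m+1$---to force $\abs{\epsilon_1} < g/2$ and $\epsilon_0\epsilon_1 \ge 0$, checking that this local adjustment is always available.

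Uniqueness is where condition (3) does its essential work. Suppose $\sum_i \epsilon_i g^i = \sum_i \epsilon_i' g^i$ with both sides satisfying (1)--(3). Reducing modulo $g$ forces $\epsilon_0 \equiv \epsilon_0' \pmod g$, and since both lie in the $g+1$ consecutive integers $\set{-g/2,\dots,g/2}$, either $\epsilon_0 = \epsilon_0'$ or $\set{\epsilon_0,\epsilon_0'} = \set{-g/2,g/2}$. In the first case I divide out by $g$ and descend. In the second, say $\epsilon_0 = g/2$ and $\epsilon_0' = -g/2$; condition (3) gives $\epsilon_1 \ge 0$ and $\epsilon_1' \le 0$ with $\abs{\epsilon_1},\abs{\epsilon_1'} \le g/2 - 1$, so $\epsilon_1' - \epsilon_1 \in \set{-(g-2),\dots,0}$. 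But subtracting the two representations and dividing by $g$ forces $\epsilon_1' - \epsilon_1 \equiv 1 \pmod g$, which no element of that range satisfies. Hence the second case never occurs and the digits agree one position at a time.

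Finally the length formula. The bound $\glength(n) \le w(n)$ is immediate, since $n = \sum_i \epsilon_i g^i$ exhibits $n$ as a sum of $w(n)$ generators $\pm g^i$. For the reverse inequality it suffices to prove the crux lemma $w(m \pm g^a) \le w(m) + 1$ for every integer $m$ and every $a \ge 0$: given any expression of $n$ as a sum of $k$ generators, building $n$ up one generator at a time from $0$ (where $w(0)=0$) then yields $w(n) \le k$, and minimizing over all such expressions gives $w(n) \le \glength(n)$. I would prove the crux lemma by tracking how incrementing the coefficient in position $a$ of the canonical representation of $m$ propagates as a carry. This carry analysis is the main obstacle in the even case: a coefficient equal to $\pm g/2$ cannot absorb a further $\pm 1$ and must carry, and such a carry could a priori cascade up through higher positions. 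Condition (3) is precisely the structural feature that tames the cascade---no two consecutive coefficients can both have absolute value $g/2$, and the sign constraint $\epsilon_i\epsilon_{i+1}\ge 0$ fixes the direction in which each carry resolves---so I expect to show that each carry either terminates or trades one coefficient for another of equal or smaller absolute value, bounding the net change in $w$ by $1$ and closing the induction.
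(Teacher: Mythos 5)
A preliminary remark: the paper does not prove this theorem at all. It is quoted verbatim as Theorem 3 of Nathanson \cite{nathanson-gadic} and used downstream as a black box, so there is no in-paper argument to compare yours against; what follows judges your proposal on its own terms.

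Your three-part architecture is the natural one, and two of the three parts are essentially right. Existence: the induction on $\abs{n}$ works, and the only content is exactly the check you defer --- when $n \equiv g/2 \pmod g$ one must verify that one of the two choices $\epsilon_0 = \pm g/2$ is compatible with condition (3). Splitting on the residue of $m = (n - g/2)/g$ modulo $g$ (first digit of $m$ nonnegative and less than $g/2$; first digit negative; or $m \equiv g/2 \pmod g$, in which case one passes to $m+1$, whose first digit is then forced to equal $-(g/2-1)$) confirms that it always is. Uniqueness: your argument is complete and correct; the observation that $\epsilon_1' - \epsilon_1$ lies in $\{-(g-2), \dots, 0\}$ yet would have to be congruent to $1$ modulo $g$ is precisely how condition (3) kills the ambiguity between $g/2$ and $-g/2$, and the rest is routine descent.

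The genuine soft spot is the crux lemma $w(m \pm g^a) \le w(m) + 1$, which you correctly identify as carrying the whole weight of the length formula. The lemma is true, and your reduction of the ``moreover'' statement to it is clean, but the carry analysis is heavier than your sketch indicates, in a direction you do not anticipate: you discuss only carries propagating \emph{upward}, whereas incrementing position $a$ can also break condition (3) at position $a-1$, \emph{below} the edit. Concretely, if $(\epsilon_{a-1}, \epsilon_a) = (-g/2,\, 0)$, adding $g^a$ yields $(-g/2,\, 1)$, violating the sign requirement; the repair is the rewrite $(-g/2,\, 1) \mapsto (g/2,\, 0)$, which is sum-neutral. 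If $(\epsilon_{a-1}, \epsilon_a) = (g/2,\, g/2-1)$, adding $g^a$ yields $(g/2,\, g/2)$, violating $\abs{\epsilon_a} < g/2$; the repair is $(g/2,\, g/2) \mapsto (-g/2,\, -(g/2-1))$ together with a carry into position $a+1$, which restarts the induction one level up at zero net cost so far. Both repairs preserve your claimed bound, and one must also check that flipping the sign of the digit at $a-1$ cannot create a further violation at $a-2$ (it cannot: condition (3) at $a-2$ would force $\abs{\epsilon_{a-1}} < g/2$, contradicting $\abs{\epsilon_{a-1}} = g/2$). So the plan does close, but these downward cases are an integral part of the induction, not an afterthought, and your stated invariant (``each carry trades one coefficient for another of equal or smaller absolute value'') has not been verified against them. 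I would call this a correct strategy whose hardest component is still a sketch: what is missing is diligence in the casework rather than an idea.
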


For any integer $n$, Theorems~\ref{thm:gadic-odd} and \ref{thm:gadic-even} 
 give a unique $g$-adic expression for $n$ that realizes a
geodesic path from $0$ to $n$ in $C_g$.  Thus there is an $N > 0$ such that $n = \sum_{i =
  0}^N \epsilon_i g^i$, $\epsilon_N \neq 0$, and $\glength(n) = \sum_{i =
  0}^\infty\ \abs{\epsilon_i}$.  We call $n = \sum_{i = 0}^N \epsilon_i g^i$
the \emph{minimal $g$-adic expansion}, and denote it by  
\[[n]_g = [\epsilon_0, \epsilon_1, \dots, \epsilon_N].\]  

\begin{remark}
It is not the case that there is a unique geodesic path from $0$ to
$n$.  For example, $11 = 1 + 2^1 + 2^3 = -1 - 2^2 + 2^4$.
\end{remark}

It is interesting to look at how $\glength(n)$ varies as a function of $g$.  See
Figure~\ref{fig:20233509}.  We chose a random number $n = \num{20233509}$, and
produced a plot of $y = \glength(n)$ for a range of values for $g$.  For $g$
sufficiently large, we  have $\glength(n) = n$, but it appears that interesting
things happen along the way. 

\begin{figure} 
\begin{tabular}{cc}
\includegraphics[width = 0.4\textwidth]{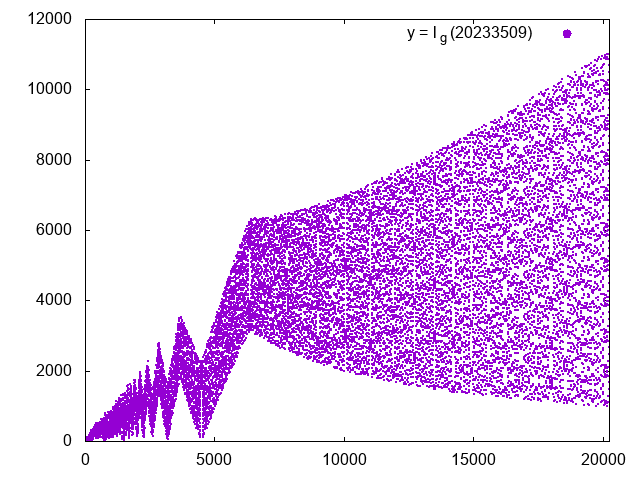} &
\includegraphics[width = 0.4\textwidth]{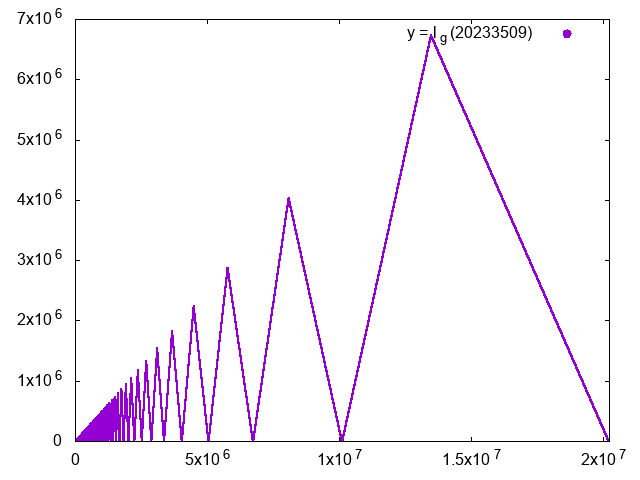}
\end{tabular}
\caption{Plots of $y = \glength(20233509)$ as a function of $g$.} \label{fig:20233509}
\end{figure} 

\begin{example}
 The minimal $5$-adic expansion of $46$ is $[46]_5  =[1, -1, 2]$, so $46 = 1 - 5
 + 2\cdot 5^2$, and $\ell_5(46) = 1 + 1 + 2 = 4$.   
\end{example}

We denote by $\lambda_{g}(h)$ the smallest positive integer of $g$-length $h$ in
$C_g$. We find an explicit formula for $\lambda_g$ in
Theorems~\ref{thm:lambda-odd} and ~\ref{thm:lambda-even} below using
Nathanson's $g$-adic representation \cite{nathanson-gadic} of positive integers.
The first few values are tabulated in Table~\ref{tab:lambda-prime}.  We remark
that the values of $\lambda_2$ show up in  
The On-Line Encyclopedia of Integer Sequences (\href{https://oeis.org}{OEIS}) as
\href{https://oeis.org/A007583}{A007583}, and the values of $\lambda_3$ show up
as \href{https://oeis.org/A007051}{A007051}.  The sequences of values for
$\lambda_p$ for other primes $p$ did not appear, so the second author has added
them. 
As an example, we chose the prime 19; 
Figure~\ref{fig:lambda_19} shows the  integers less than \num{10000} and their
$19$-length, together with the graph of $y = \lambda_{19}(x)$. 

\begin{table}
\caption{First few values of $\lambda_p(k)$ for primes $p < 30$.}
\label{tab:lambda-prime}
\begin{tabular}{rrrrrrrrrrr}
\toprule
$k$ & 2& 3 & 5 & 7 & 11 & 13 & 17 & 19 & 23 & 29\\
\midrule
1&1&1&1&1&1&1&1&1&1&1\\
2&3&2&2&2&2&2&2&2&2&2\\
3&11&5&3&3&3&3&3&3&3&3\\
4&43&14&8&4&4&4&4&4&4&4\\
5&171&41&13&11&5&5&5&5&5&5\\
6&683&122&38&18&6&6&6&6&6&6\\
7&2731&365&63&25&17&7&7&7&7&7\\
8&10923&1094&188&74&28&20&8&8&8&8\\
9&43691&3281&313&123&39&33&9&9&9&9\\
10&174763&9842&938&172&50&46&26&10&10&10\\
11&699051&29525&1563&515&61&59&43&29&11&11\\
12&2796203&88574&4688&858&182&72&60&48&12&12\\
13&11184811&265721&7813&1201&303&85&77&67&35&13\\
14&44739243&797162&23438&3602&424&254&94&86&58&14\\
15&178956971&2391485&39063&6003&545&423&111&105&81&15\\
16&715827883&7174454&117188&8404&666&592&128&124&104&44\\
17&2863311531&21523361&195313&25211&1997&761&145&143&127&73\\
18&11453246123&64570082&585938&42018&3328&930&434&162&150&102\\
19&45812984491&193710245&976563&58825&4659&1099&723&181&173&131\\
20&183251937963&581130734&2929688&176474&5990&3296&1012&542&196&160\\
\bottomrule
\end{tabular}
\end{table}

\begin{figure}
\includegraphics[width=0.5\textwidth]{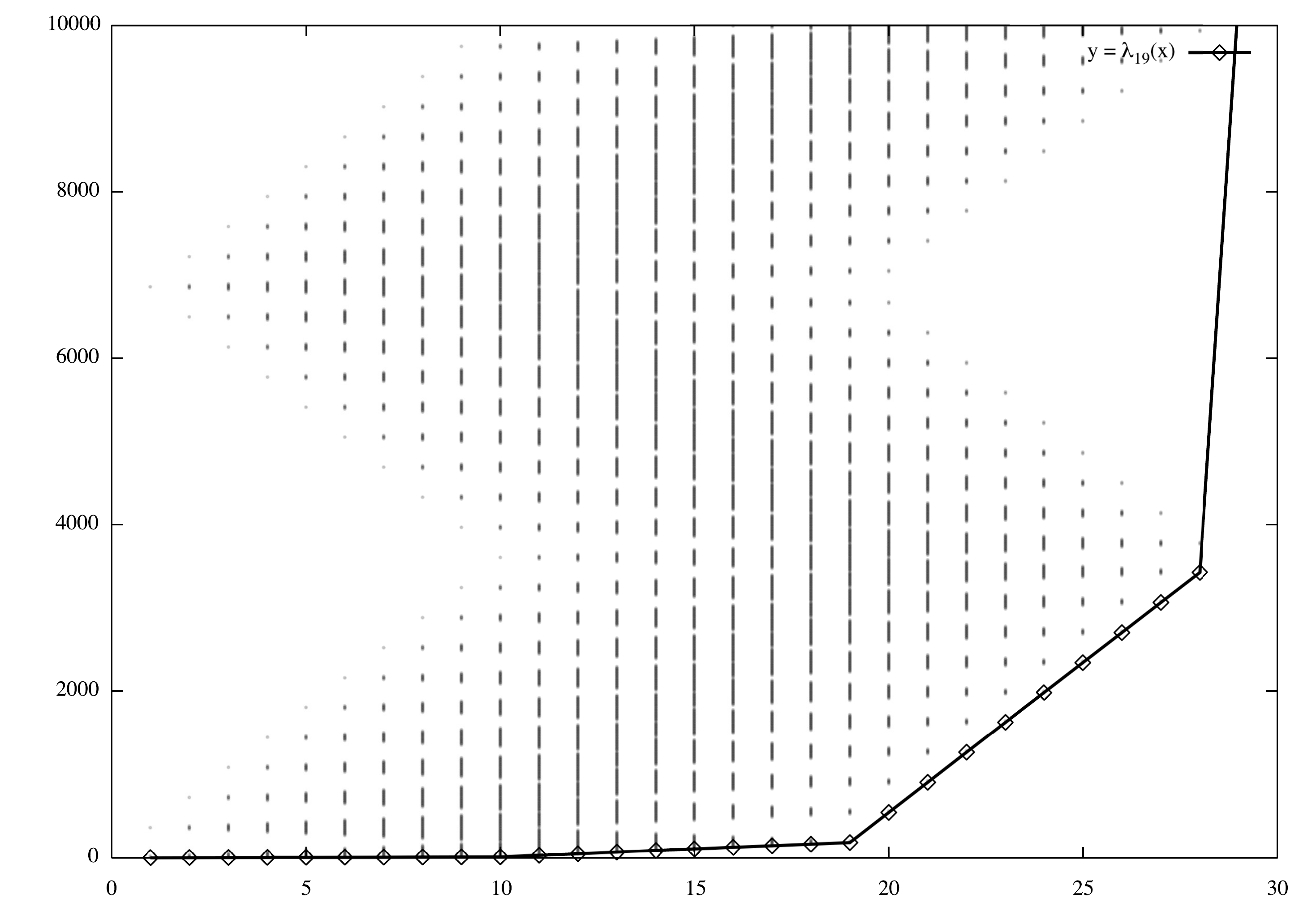}
\caption{The integers up to $\num{10000}$ whose $19$-lengths are $x$ are shown
  together with the graph of $y = \lambda_{19}(x)$.} \label{fig:lambda_19} 
\end{figure}

The following two theorems give an explicit formula for $\lambda_g$.
First, we need a preliminary lemma that relates the digits in
the minimal $g$-adic expansion of an integer to its size.

\begin{lemma}\label{lem:leading-coefficient}
  Let $g > 1$ be an integer.  Let $m$ and $n$ be integers
  with minimal $g$-adic expansions $[n]_g = [n_0, n_1, \dots, n_N]$
  and $[m]_g = [m_0, m_1, \dots, m_M]$. Set $n_i = 0$ for
  $i > N$ and $m_i = 0$ for $i > M$. Let $t \geq 0$ be the 
  largest integer such 
  that $n_i = m_i$ for all $i > t$.  Then $n > m$ if and only if $n_t
  > m_t$. 
\end{lemma}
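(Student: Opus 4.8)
The plan is to prove the equivalence by induction on $t$, exploiting the fact that deleting the bottom digit of a minimal $g$-adic expansion again produces a minimal expansion. Since the statement is vacuous when $n = m$, I would assume $n \neq m$, so that $t$ is the top index at which the two expansions differ; thus $n_t \neq m_t$ while $n_i = m_i$ for all $i > t$. By the symmetry obtained from swapping $n$ and $m$, it then suffices to establish the single implication $n_t > m_t \Rightarrow n > m$: the cases $n_t > m_t$ and $n_t < m_t$ are exhaustive (as $n_t \neq m_t$), and they yield $n > m$ and $n < m$ respectively, which gives the biconditional. The base case $t = 0$ is immediate, since all higher digits coincide and cancel, leaving $n - m = \sum_{i \ge 0}(n_i - m_i)g^i = n_0 - m_0$.

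For the inductive step I would use that the conditions (1)--(2) of Theorem~\ref{thm:gadic-odd} (odd $g$) and (1)--(3) of Theorem~\ref{thm:gadic-even} (even $g$) are \emph{hereditary under an index shift}: each condition constrains only individual digits and consecutive pairs, so the truncated sequence $[n_1, n_2, \dots, n_N]$ still satisfies all of them. By the uniqueness in those theorems it must therefore be the minimal $g$-adic expansion of $n' := (n - n_0)/g = \sum_{i \ge 1} n_i g^{i-1}$, and similarly for $m' := (m - m_0)/g$. Since the expansions of $n'$ and $m'$ first differ at index $t - 1$, the inductive hypothesis applies and gives $n' > m' \iff n_t > m_t$ (using $n'_{t-1} = n_t$, $m'_{t-1} = m_t$). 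I would then transfer this to $n, m$ through the identity
\[
n - m = (n_0 - m_0) + g\,(n' - m').
\]
If $n_t > m_t$, then $n' > m'$, so $n' - m' \ge 1$ and $g(n' - m') \ge g$; since $|n_0 - m_0| \le g$ (each digit lies in an interval of radius at most $g/2$), this yields $n - m \ge g - g = 0$, and because $n \neq m$ it forces $n - m > 0$.

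The routine ingredients are the base case and the digit bound $|n_0 - m_0| \le g$. The step that needs care — and the one I expect to be the main obstacle — is the even case of the inductive step, where the estimate only produces $n - m \ge 0$, so the strict inequality genuinely hinges on $n \neq m$, i.e.\ on the fact that $n_t \neq m_t$ propagates down to $n' \neq m'$. For odd $g$ the stronger bound $|n_0 - m_0| \le g - 1$ gives $n - m \ge 1$ outright, and indeed a direct geometric-series estimate $\bigl|\sum_{i < t}(n_i - m_i)g^i\bigr| \le \sum_{i < t}(g-1)g^i = g^t - 1$ settles the odd case without any induction. I would nevertheless run the argument uniformly via the shift, making the hereditary property of conditions (1)--(3) fully explicit, precisely because that property is what allows the one clean inequality above to cover the even case, where a naive term-by-term bound on $\sum_{i < t}(n_i - m_i)g^i$ (whose digits can reach $\pm g/2$) is too weak to conclude.
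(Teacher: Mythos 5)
Your proof is correct, but it takes a genuinely different route from the paper's. The paper argues in one shot: after subtracting the common digits above index $t$, reducing to positive integers, and relabeling so that $n_t > m_t$, it suffices to prove $\sum_{i<t}(n_i - m_i)g^i < g^t$, which follows from the digit-wise bound $n_i - m_i \le g-1$ and summing a geometric series. For odd $g$ that digit bound is immediate; for even $g$ --- exactly the obstacle you identified, since individual digit differences can reach $g$ --- the paper performs an extremal analysis using condition (3) of Theorem~\ref{thm:gadic-even}: the largest possible low part of $n$ and the smallest possible low part of $m$ must alternate $\pm\frac{g}{2}$ with $\pm(\frac{g}{2}-1)$ (using the side remark that $m_{t-1}\neq -\frac{g}{2}$ when $m>0$), so even in the worst configuration the digit-wise differences stay $\le g-1$. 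Your induction-with-shift replaces that extremal analysis by two softer ingredients: the hereditary property of conditions (1)--(3) under deletion of the bottom digit together with the uniqueness statement (which identifies $[n_1,\dots,n_N]$ as the minimal expansion of $(n-n_0)/g$), and the observation that uniqueness upgrades the non-strict inequality $n-m \ge g(n'-m') - \abs{n_0-m_0} \ge 0$ to a strict one because differing expansions force $n \neq m$. What each approach buys: the paper's proof is shorter and produces the explicit estimate $\sum_{i<t}(n_i-m_i)g^i \le g^t-1$; yours treats odd and even $g$, and positive and negative integers, completely uniformly, and avoids the somewhat delicate identification of the extremal alternating configurations. The only point to make explicit in a final write-up is the degenerate case where the shifted integer is $0$ (all digits zero, e.g.\ when $N=0<t$), which your uniqueness argument does cover but which deserves a sentence.
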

\begin{proof}
By subtracting $s = \sum_{i = t + 1}^\infty n_i g^i$, we can assume without
loss of generality that $n = \sum_{i = 0}^ t  n_i g^i$ and $m =
\sum_{i = 0}^ t  m_i g^i$, so that $N = M = t$.  Since $-n = \sum_{i = 0}^ N
(-n_i) g^i$ and similarly for $m$, it suffices consider positive integers $m$
and $n$.  Relabel if necessary to assume without loss of generality that $n_N >
m_N$.  Then 
\[n - m = \sum_{i = 0}^ N  (n_i - m_i) g^i  \geq  g^N + \sum_{i = 0}^{N-1}
(n_i - m_i) g^i.\]
Thus it suffices to show 
\begin{equation}\label{eq:bound}
\sum_{i = 0}^{N - 1} (n_i - m_i)g^i < g^N.
\end{equation}
Let $n'= \sum_{i = 0}^{N - 1} n_i g^i$, and let $m' = \sum_{i =
  0}^{N - 1} m_ig^i$. The left side of \eqref{eq:bound} is maximized when $n'$
is positive and as large as possible, and $m'$ is negative and as small as possible, 
in which case we claim that $n_i-m_i\le g-1.$ Thus, it suffices to show~\eqref{eq:bound} 
in this case.

Suppose $g$ is odd.  Let $b = \frac{g - 1}{2}$.  Theorem~\ref{thm:gadic-odd}
implies $m_i$ and 
$n_i$ are in $\set{0, \pm 1, \dots, \pm b}$, so
\begin{equation}\label{eq:oddbound}
n_i - m_i \leq b + b = g - 1.
\end{equation}

Suppose $g$ is even.  Let $b = \frac{g}{2}$.
Theorem~\ref{thm:gadic-even} implies $m_i$ and 
$n_i$ are in $\set{0, \pm 1, \dots, \pm b}$.  Furthermore, 
if a digit $\abs{\epsilon_i} = b$, then $\abs{\epsilon_{i + 1}} <
b$ and $\epsilon_i\epsilon_{i + 1}
\geq 0$.  Note that $m'_{N - 1} \neq -b$, since $m > 
0$.  It follows that the smallest $m'$ can be is when the minimal
$g$-adic expansion of $m'$ alternates $-b$ and $-(b - 1)$; the largest
$n'$ can be is when the minimal $g$-adic 
expansion of $n'$ alternates between $b - 1$ and $b$.
Thus \[[m']_g \geq  [\dots, -b, -(b - 1)] \quad \text{and} \quad 
[n']_g \leq [\dots, b - 1, b],\]
so 
\begin{equation}\label{eq:evenbound}
  n_i - m_i \leq b + (b - 1) = g - 1.
\end{equation}

Thus, we have
\[\sum_{i = 0}^{N - 1} (n_i - m_i)g^i \leq \sum_{i = 0}^{N - 1} (g
- 1)g^i = (g
- 1) \sum_{i = 0}^{N - 1} g^i = (g
- 1)\left(\frac{g^N - 1}{g - 1}\right) = g^N - 1,\]
so \eqref{eq:bound} follows.
\end{proof}

\begin{theorem} \label{thm:lambda-odd}
Let $g > 1$ be an odd integer, and let $k>0$ be an integer.  Let $q =
\floor{\frac{2k}{g-1}}$, and let $r = k \bmod{\frac{g - 1}{2}}$ so that $k =
q(\frac{g - 1}{2}) + r$.  Let
\begin{align*}
A &= \begin{cases}
 \frac{g-1}{2} & \text{if $r = 0$,}\\
 -\left(\frac{g-1}{2}\right) & \text{otherwise;}\\
\end{cases} & 
B &= \begin{cases}
 0 & \text{if $r = 0$,}\\
 r & \text{otherwise;}\\
\end{cases}
\end{align*}
Then 
\[\lambda_g(k) = \frac{1 - g^{q - 1}}{2} + Ag^{q - 1} + Bg^q.\]
\end{theorem}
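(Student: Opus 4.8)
The plan is to work directly with Nathanson's unique $g$-adic representation from Theorem~\ref{thm:gadic-odd}. Writing $b = \frac{g-1}{2}$, computing $\lambda_g(k)$ amounts to minimizing the positive value of $n = \sum_i \epsilon_i g^i$ over all finitely-supported digit strings with $\epsilon_i \in \set{0, \pm 1, \dots, \pm b}$ and $\sum_i \abs{\epsilon_i} = k$. I would phrase the whole argument as a greedy, most-significant-digit-first minimization, using Lemma~\ref{lem:leading-coefficient} as the comparison oracle: since two integers compare according to their highest differing digit, minimizing $n$ numerically is the same as minimizing its digit string lexicographically from the top down.

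First I would record two structural consequences of positivity together with the Lemma. (i) The highest nonzero digit of a positive $n$ must itself be positive (compare $n$ against $0$). (ii) Among positive integers, a smaller leading position yields a strictly smaller value: at the larger of the two leading positions one string has digit $0$ and the other a positive digit, so the Lemma decides the comparison. Hence the minimizer is attained at the smallest feasible leading position $p$, which I then pin down by a capacity count. If the leading position is $p$, then the $p$ lower slots $0,\dots,p-1$ carry total weight $k - \epsilon_p \le pb$ while $\epsilon_p \le b$, forcing $k \le (p+1)b$, i.e. $p \ge \lceil k/b\rceil - 1$. With $k = qb + r$ and $0 \le r < b$ this gives minimal leading position $q-1$ when $r = 0$ and $q$ when $r \ne 0$.

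At this minimal position the capacity bound is tight enough to determine the entire string. When $r=0$ and $p=q-1$, the weight $k=qb$ must fill all $q$ slots $0,\dots,q-1$ to absolute value exactly $b$, so the leading digit is forced to $+b$ and, by the greedy step, every lower digit is set to its smallest value $-b$. When $r\ne 0$ and $p=q$, minimality of the leading digit (again via the Lemma) pins it to its least admissible positive value $r$, after which the residual weight $k-r=qb$ fills slots $0,\dots,q-1$ to absolute value exactly $b$, all taken negative. In each case Lemma~\ref{lem:leading-coefficient} certifies that the produced string is the \emph{unique} minimizer.

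Finally I would evaluate $n$ on these two strings. In both cases $n$ is a positive leading term minus a geometric sum $b\sum_{i=0}^{j} g^i = \frac{g^{j+1}-1}{2}$ over the filled lower slots, producing $\frac{1-g^q}{2} + r g^q$ when $r \ne 0$ and $\frac{1 - g^{q-1}}{2} + b g^{q-1}$ when $r = 0$; a one-line check ($2A-1 = -g$ in the first case) shows these are exactly the expressions packaged by the piecewise constants $A$ and $B$. The main obstacle is the bookkeeping in the middle step: making the ``forced magnitude / minimal leading digit'' claims airtight requires handling the boundary cases ($r=0$ versus $r\neq 0$, and degenerate $q$ such as $q=0$, where $k<b$ and $\lambda_g(k)=k$) with care, since these are precisely where the capacity bound is saturated and where the formula's $g^{q-1}$ term only becomes meaningful after the $r=0$/$r\neq0$ split.
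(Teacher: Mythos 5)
Your proposal is correct, and it rests on the same two pillars as the paper's proof---Nathanson's odd-$g$ representation (Theorem~\ref{thm:gadic-odd}) and Lemma~\ref{lem:leading-coefficient}---but it runs the argument in the opposite direction. The paper \emph{guesses and verifies}: it writes down $n = \frac{1-g^{q-1}}{2}+Ag^{q-1}+Bg^q$, asserts as a ``straightforward computation'' that its minimal expansion is $[-b,\dots,-b,b]$ (if $r=0$) or $[-b,\dots,-b,r]$ (otherwise) with $b = \frac{g-1}{2}$, checks $\glength(n)=k$, and then proves minimality by showing that \emph{every} positive $m<n$ satisfies $\glength(m)<\glength(n)$: by the Lemma the top differing digit satisfies $m_t<n_t$, and since every lower digit of $n$ is already $-b$, the smallest value allowed, the difference is forced to occur at the leading position, which strictly drops the length. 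You instead \emph{derive} the minimizer: since for odd $g$ every bounded finitely-supported digit string is a valid Nathanson expansion with length equal to its weight, $\lambda_g(k)$ becomes a minimization over weight-$k$ strings, the Lemma converts numerical order into top-down lexicographic order, and your capacity count $k\le (p+1)b$ pins the leading position, with saturation of that bound forcing every remaining digit. Each route buys something: the paper's comparison argument is shorter and gives the stronger conclusion that $n$ is the least positive integer of length at least $k$ (no positive integer below $n$ even reaches length $k$), but it conceals how the digit string was found and leaves the expansion check to the reader; your greedy derivation explains where the string and hence the formula come from and makes the ``straightforward computation'' unnecessary, since the string is constructed rather than verified, at the cost of having to justify each greedy step (feasibility of the minimal leading position, the exchange argument at each digit, and the $q=0$ boundary case, all of which you correctly identify as the places needing care). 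Written out in full, your outline yields a complete and correct proof.
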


\begin{proof}  Let $b = \frac{g - 1}{2}$, and let $n = \frac{1 - g^{q - 1}}{2} + Ag^{q - 1} + Bg^q$.
     A straightforward 
  computation shows that the minimal $g$-adic expansion of $n$ is given by  
\begin{equation}\label{eq:odd}
[n]_g = 
\begin{cases}
\underbrace{[-b, -b, \dots, -b, b]}_{\text{$q$ digits}}  &\text{if $r
  = 0$,}\\
\underbrace{[-b,-b, \dots,
    -b, r]}_{\text{$q + 1$ digits}} & \text{otherwise}.
\end{cases}
\end{equation}  

First, we show $\glength(n)
= k$.  There are two cases to consider.
If $r = 0$, we have $q =
\floor{\frac{2k}{g-1}} = \frac{2k}{g-1}$.  It follows that 
\[\glength(n) = b q = \left(\frac{g-1}{2}\right)
\left(\frac{2k}{g-1}\right)= k,\] 
as desired.  If $r\neq 0$, then  \[\glength(n) = bq + r = k,\] by
construction.   

Finally, we show that $n$ is the smallest positive integer with this
property.  Suppose $m < n$ 
is a positive integer.  Let $[m]_g =
[m_0, m_1, \dots, m_M]$ be the 
minimal $g$-adic expansion of $m$, and  let $[n]_g = [n_0, n_1, \dots,
  n_N]$ be the minimal $g$-adic expansion given in \eqref{eq:odd}.
Set $n_i = 0$ for $i > N$, and set $m_i = 0$ for $i > M$.  Let $t$ be
the maximal index such that  
$m_t \neq n_t$.  By Lemma~\ref{lem:leading-coefficient},  we must have $m_t
< n_t$.  By Theorem~\ref{thm:gadic-odd}, we have $\abs{m_j} \leq
b$, for all $j$.  Since $n_j = -b$ for $0 \leq j \leq N - 1$,
we cannot have $t \leq N - 1$.  Since $0 \leq m < n$, this implies
$m_N < n_N$.  Furthermore, $m_j = n_j = 0$ for $j > N$.  It follows
that $\glength(m) < \glength(n)$.
\end{proof}

\begin{theorem}\label{thm:lambda-even}
Let $g > 1$ be an even integer, and let $k > 0$ be an integer.  Let $r = k
\bmod{g - 1}$.  Define integers $q$, $A$, and $B$ by  
\begin{align*}
q &= \begin{cases}
\floor{\frac{k}{g - 1}} - 1& \text{if $r = 0$},\\
\floor{\frac{k}{g - 1}} & \text{otherwise};
\end{cases}\\
A &= \begin{cases}
\frac{g}{2} & \text{if $r = 0$ or $r > \frac{g}{2}$},\\
r & \text{otherwise;}
\end{cases} \\
B &= \begin{cases}
\frac{g}{2} - 1 & \text{if $r = 0$,}\\
r - \frac{g}{2} & \text{if $r > \frac{g}{2}$,}\\
0 & \text{otherwise.}
\end{cases}
\end{align*}
Then 
\[\lambda_g(k) = \frac{g(1 - g^{2q})}{2  (1 + g)} + Ag^{2q} + Bg^{2q
            + 1}.\]
\end{theorem}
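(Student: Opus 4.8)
The plan is to mirror the proof of Theorem~\ref{thm:lambda-odd}. Write $b = \frac{g}{2}$ and let $n$ denote the claimed value. The first task is to identify the minimal $g$-adic expansion of $n$. Using $\frac{g^{2q}-1}{g+1} = (g-1)\sum_{j=0}^{q-1}g^{2j}$ one sees that $\frac{g(1-g^{2q})}{2(1+g)} = -b(g-1)\sum_{j=0}^{q-1}g^{2j}$, and the pair identity $-b - (b-1)g = -b(g-1)$ shows that this is exactly the value of the block of $2q$ digits that alternate $-b$ (even positions) and $-(b-1)$ (odd positions). Adding $Ag^{2q} + Bg^{2q+1}$ appends the digits $A$ and $B$, so that
\[ [n]_g = [\underbrace{-b, -(b-1), \dots, -b, -(b-1)}_{2q}, A, B], \]
with the trailing $B$ omitted when $B = 0$. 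I would then verify that this is a genuine minimal expansion: conditions (1) and (2) of Theorem~\ref{thm:gadic-even} follow from $0 \le A \le b$ and $0 \le B \le b-1$, and condition (3) is checked at each junction---inside the block every $-b$ is followed by $-(b-1)$ of the same sign, and whenever $A = b$ one has $B \ge 0$ and $|B| < b$ at position $2q$. A case split on $(A,B)$ then gives $\glength(n) = q(g-1) + |A| + |B| = k$ directly.

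The heart of the argument, as in the odd case, is minimality: given a positive $m < n$ I must show $\glength(m) < \glength(n) = k$. Let $t$ be the largest index with $m_t \neq n_t$; Lemma~\ref{lem:leading-coefficient} gives $m_t < n_t$. The first key step is to show $t \ge 2q$. This is where the even case genuinely diverges from the odd one: the low digits of $n$ are not all maximally negative, so I cannot merely invoke $m_t \ge -b$. Instead, at an odd position $t < 2q$ (where $n_t = -(b-1)$) the inequality $m_t < -(b-1)$ forces $m_t = -b$; but then condition (3) of Theorem~\ref{thm:gadic-even} applied to $m$ requires $|m_{t+1}| < b$ with a compatible sign, which is impossible because $m_{t+1} = n_{t+1}$ equals either $-b$ (the next even position) or $A = b > 0$ (when $t+1 = 2q$). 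At an even position $t < 2q$ the bound $m_t < -b$ is outright impossible. Hence $t \ge 2q$, and since a negative leading digit would contradict $m > 0$ via the Lemma, also $t \le N$.

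The second key step is a uniform length bound for low-order parts: if an integer has all nonzero digits among positions $0, \dots, P-1$, then condition (3) forces the digits of absolute value $b$ to be nonadjacent, so its $g$-length is at most $\lceil P/2\rceil\, b + \lfloor P/2 \rfloor(b-1)$, which equals $q(g-1)$ for $P = 2q$ and $q(g-1) + b$ for $P = 2q+1$. Applying this to the truncation of $m$ below position $t$, together with the case analysis $t \in \{2q, 2q+1\}$ and the observations that the leading digit of $m$ is positive and that $m_{2q} = -b$ is barred by condition (3) when $B \neq 0$, yields in every case $\glength(m) \le k - 1 < k$. The main obstacle is thus the minimality argument---precisely the interplay between the digit-comparison Lemma and condition (3), which must be used both to push $t$ up to $2q$ and to cap the absolute value of the digit at position $2q$. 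The degenerate case $g = 2$ (where $b = 1$, $r \equiv 0$, and $B = 0$) falls under the $P = 2q$ branch and needs no separate treatment.
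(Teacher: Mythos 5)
Your proposal is correct, and it follows the same skeleton as the paper's proof: identify the explicit minimal $g$-adic expansion of the claimed value $n$ (your block computation via the pair identity $-b-(b-1)g=-b(g-1)$ reproduces the paper's displayed expansion \eqref{eq:even}), verify $\glength(n)=k$ by the same three-way split on $r$, and prove minimality by taking the top disagreeing digit $t$ and invoking Lemma~\ref{lem:leading-coefficient}. One small slip: in ruling out odd $t<2q$ with $t+1=2q$, you assert $m_{t+1}=A=b$; when $0<r<b$ one has $A=r<b$, and the contradiction there comes from the sign half of condition (3) of Theorem~\ref{thm:gadic-even} (since $m_t=-b<0$ while $A>0$), which your phrase ``with a compatible sign'' already covers---the argument survives, but the parenthetical misstates $A$.

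Where your write-up genuinely improves on the paper is the finish of the minimality step. After establishing $t\geq 2q$, the paper asserts $\abs{m_j}\leq n_j$ (read: $\abs{n_j}$) for $0\leq j\leq N-1$ and immediately concludes $\glength(m)<\glength(n)$. Taken digit by digit this domination claim is false: at an odd position $j<2q$ the digit of $n$ has absolute value $b-1$, while a legal expansion of $m$ may carry $\abs{m_j}=b$ there, since condition (3) constrains only the \emph{next} digit of $m$, not the digits of $n$. Your replacement---the aggregate bound that any window of $P$ consecutive positions contributes at most $\lceil P/2\rceil b+\lfloor P/2\rfloor (b-1)$, giving $q(g-1)$ for $P=2q$ and $q(g-1)+b$ for $P=2q+1$---together with the case analysis $t\in\{2q,2q+1\}$, the positivity of the leading digit of $m$, and barring $m_{2q}=-b$ via condition (3) when $B\neq 0$, is exactly what is needed to make the strict inequality $\glength(m)<k$ rigorous; note also that the comparison at position $t$ itself requires the positivity observation (e.g.\ $\abs{m_t}$ can exceed $\abs{n_t}$ when $m_t<0$, and only $m>0$ excludes this), which the paper likewise leaves implicit. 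So: same approach and same key lemma, but your proof supplies, and in effect corrects, the counting that the paper compresses into one sentence.
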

\begin{proof}
Let $b = \frac{g}{2}$, and let  \[n = \frac{g(1 - g^{2q})}{2  (1 + g)} + Ag^{2q} + Bg^{2q
            + 1}.\]
A straightforward computation shows
\begin{equation}\label{eq:even}
[n]_g  = 
\begin{cases}
\underbrace{[-b,-(b-1),-b,-(b-1),\ldots,-b,-(b-1),b,b-1]}_{\text{$2q + 2$
    digits}} & \text{if $r = 0$,}\\  
\underbrace{[-b,-(b-1),-b,-(b-1),\ldots,-b,-(b-1),b,r-b]}_{\text{$2q + 2$
    digits}} & \text{if $r > b$,}\\  
\underbrace{[-b,-(b-1),-b,-(b-1),\ldots,-b,-(b-1),r]}_{\text{$2q + 1$
    digits}} & \text{otherwise.}\\  
\end{cases}\end{equation}

First, we show $\ell_g(n) = k$.  There are three cases to consider.  Suppose $r
= 0$.  Then $g-1$ divides $k$, so  
\[q = \floor*{\frac{k}{g - 1}} - 1 = \frac{k}{g - 1} -1.\] Then 
\begin{align*}
  \ell_g(n) &= q(2b - 1) + b + (b - 1)\\
&= \left(\frac{k}{g - 1} - 1\right)(g - 1) + \frac{g}{2}
  + \left(\frac{g}{2} - 1\right)\\ 
&= k.
\end{align*}
If $r > \frac{g}{2}$, then 
\begin{align*}
  \ell_g(n) &= q(2b - 1) + b + (r - b)\\
&= \floor*{\frac{k}{g - 1}} (g - 1) + r\\
&= k.
\end{align*}
If $1 \leq r \leq b$, then 
\begin{align*}
  \ell_g(n) &= q(2b - 1) + r\\
&= \floor*{\frac{k}{g - 1}} (g - 1) + r\\
&= k.
\end{align*}

Finally, we show that $n$ is the smallest positive integer with this
property.  Suppose $m < n$ is a positive integer.  Let $[m]_g= 
[m_0, m_1, \dots, m_M]$ be the 
minimal $g$-adic expansion of $m$, and let $[n]_g = [n_0, n_1, \dots,
  n_N]$ be the minimal $g$-adic expansion given in \eqref{eq:even}.
Let $t$ be the maximal index such that 
$m_t \neq n_t$.  By Lemma~\ref{lem:leading-coefficient},  we have $m_t
< n_t$.  By Theorem~\ref{thm:gadic-even}, we have $\abs{m_j} \leq b$,
and if $\abs{m_j} = b$, then $\abs{m_{j + 1}} < b$ and $m_j m_{j + 1}
\geq 0$, for all $j$.  From \eqref{eq:even}, we cannot have $t \leq 2q
- 1$.  Then $\abs{m_j} \leq n_j$ for $0 \leq j \leq N -
1$, $m_j = 0$ for $j > N$, and $m_t < n_t$.  Thus $\glength(m) <
\glength(n)$. 
\end{proof}

\section{Metric properties of \texorpdfstring{$C_P$}{CP}}\label{sec:CP}
Let $P$ be a set of positive integers.   Let $C_P = \Cay(\ZZ, S_P)$ denote the Cayley graph
of $\ZZ$ with the generating set   
\[S_P = \bigcup_{a \in P}\set{\pm a^i \given  i \in \ZZ_{\ge 0}}.\]

We give $C_P$ the edge-length metric $d_{S_P}$, and use $\ell_P(n)$ to denote
the $P$-length of $n$ in the metric $d_{S_P}$, i.e., $\ell_P(n)=d_{S_P}(0,n)$.  
The $P$-length function is much more subtle when $\#P > 1$.

\begin{question}\label{q:lambda}
Let $P$ be a subset of primes.   Let $\lambda_{P}(h)$ denote the smallest positive
integer of $P$-length $h$ in $C_P$. Compute the function $\lambda_P(h)$.  
\end{question}

There are partial results addressing Question~\ref{q:lambda} when $\card{P} <
\infty$.  Hadju and Tijdeman \cite{hajdu-tijdeman2011} prove that
$\exp(ck)<\lambda_P(k)<\exp((k \log k)^C)$, with some constant $c$ depending on
$P$ and an absolute constant $C$.  

Nathanson \cite{nathanson-diameter} gives a  class of generating sets for $\ZZ$
whose arithmetic diameters are infinite. 

\begin{theorem}[{\cite[Theorem 5]{nathanson-diameter}}]\label{thm:nathanson-diameter}
If $P$ is a finite set of positive integers,  then $C_P$ has infinite diameter.
\end{theorem}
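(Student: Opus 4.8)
The plan is to reduce the statement to the unboundedness of the length function and then derive a contradiction by counting. Since $C_P = \Cay(\ZZ, S_P)$ is a Cayley graph it is vertex-transitive, and because $S_P$ is symmetric we have $d_{S_P}(a,b) = \ell_P(b-a)$; hence the diameter of $C_P$ equals $\sup_{n \in \ZZ} \ell_P(n)$, and it suffices to show that $\ell_P$ is unbounded. I would argue by contradiction: suppose every integer satisfies $\ell_P(n) \le k$ for some fixed $k$. Then every $n$ with $1 \le n \le x$ is a sum of at most $k$ elements of $S_P$, so $A_k(x) = \set{n \given 1 \le n \le x,\ \ell_P(n) \le k}$ has exactly $x$ elements. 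The goal is to show instead that $\abs{A_k(x)} = o(x)$ as $x \to \infty$, which is the sought contradiction; concretely I would aim for the polylogarithmic bound $\abs{A_k(x)} = O_k\bigl((\log x)^{O(k)}\bigr)$.

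To bound $\abs{A_k(x)}$, write $P = \set{a_1, \dots, a_m}$ and group the terms of a representation by base, so that each $n \in A_k(x)$ is $n = \sum_{i=1}^m Q_i$, where $Q_i = \sum_e c_{i,e}\, a_i^e$ collects the base-$a_i$ terms and $\sum_{i,e} \abs{c_{i,e}} \le k$. The easy regime is the one in which all exponents appearing are at most $C\log x$: there each generator lies in a set of size $O(\log x)$, so the number of admissible ordered tuples, and hence of distinct values, is at most $\bigl(O(\log x)\bigr)^k$. The single-base case is already contained here, for by Theorems~\ref{thm:gadic-odd} and~\ref{thm:gadic-even} the minimal $a_i$-adic expansion of any value of $a_i$-length at most $k$ has its top nonzero digit in a position of size $O(\log x)$ (its place value is comparable to the integer, which is the content of Lemma~\ref{lem:leading-coefficient}), so the integers in $[1,x]$ of bounded single-base length are polylogarithmic in number. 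Thus the whole difficulty concentrates in representations that use a power far larger than $x$.

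The main obstacle is therefore to rule out that large powers contribute many new values through cancellation. If some term has absolute value much larger than $x$ while the total lies in $[1,x]$, the largest terms must nearly cancel; since powers of a \emph{single} base cannot cancel to something small (the leading term dominates, again by the estimate underlying Lemma~\ref{lem:leading-coefficient}), the cancellation must occur across two distinct bases, forcing $a_i^{e} \approx a_j^{f}$ with $e,f$ large. I would control these near-coincidences with a lower bound for linear forms in logarithms (Baker's theorem): for $a_i \neq a_j$ one has $\abs{a_i^{e} - a_j^{f}} \gg a_i^{e} e^{-C}$, so a discrepancy of size at most a fixed multiple of $x$ forces $a_i^{e} \le x^{O(1)}$, i.e. $e = O(\log x)$. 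Iterating this across the at most $k$ terms bounds every exponent by $O_k(\log x)$ and collapses the large-power regime into the easy one, giving $\abs{A_k(x)} = O_k\bigl((\log x)^{O(k)}\bigr) = o(x)$. I expect this Diophantine step---showing that few-term sums of powers drawn from several bases are genuinely sparse---to be the crux: for a single base it is elementary via Nathanson's representations, but for $\card{P} \ge 2$ it rests on effective transcendence results, the same phenomenon quantified by the Hajdu--Tijdeman bounds \cite{hajdu-tijdeman2011} recalled above.
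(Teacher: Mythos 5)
You should first note that the paper gives no proof of this statement at all: it is imported verbatim as Theorem~5 of \cite{nathanson-diameter}, and Nathanson's proof in turn rests on the Hajdu--Tijdeman bounds \cite{hajdu-tijdeman2011}, which are established by Baker's method (Matveev's Archimedean and Yu's $p$-adic lower bounds for linear forms in logarithms). So the comparison here is against that literature rather than an internal argument. Your skeleton is the right one and matches the known quantitative proofs in spirit: the diameter equals $\sup_n \ell_P(n)$ by translation invariance, so it suffices to show that the set $A_k(x)$ of integers in $[1,x]$ of $P$-length at most $k$ has $o(x)$ elements; and your ``easy regime'' (all exponents $O(\log x)$, hence at most $(O(\log x))^k$ values) is handled correctly. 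Your single-base cancellation claim is also salvageable, though not by Lemma~\ref{lem:leading-coefficient} as you suggest: a sum of $k$ signed powers of a single $a_i$ need not have digits in Nathanson's allowed range once $k \ge a_i$, so the right argument is divisibility --- if every exponent in a single-base subsum is at least $E$, that subsum is a multiple of $a_i^E$, hence is $0$ or at least $a_i^E$ in absolute value.

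The genuine gap is the sentence ``Iterating this across the at most $k$ terms bounds every exponent by $O_k(\log x)$.'' The two-term Baker inequality $\abs{a_i^e - a_j^f} \gg a_i^e e^{-C}$ only constrains configurations in which the near-cancellation occurs between a \emph{pair} of powers, but with three or more huge terms the sum can be small while no pair is remotely close. For instance, take $P = \set{2,3,5}$ and exponents with $3^b \approx 0.3 \cdot 5^c$ and $2^a \approx 0.7\cdot 5^c$ (such exponents exist in abundance since $\log 3/\log 5$ and $\log 2/\log 5$ are irrational); then every pairwise difference $\abs{5^c - 3^b}$, $\abs{5^c - 2^a}$, $\abs{3^b - 2^a}$ is comparable to $5^c$ itself, your inequality is vacuous, and nothing in your argument prevents $5^c - 3^b - 2^a$ from landing in $[1,x]$. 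Ruling out exactly these configurations requires a lower bound for nonvanishing multi-term sums $\abs{\sum_i \epsilon_i a_{j_i}^{e_i}}$ in terms of the largest exponent --- together with an induction to dispose of exactly vanishing subsums and of multiplicatively dependent bases such as $2$ and $4$, for which your two-term inequality is simply false. That multi-term bound is precisely the hard content of \cite{hajdu-tijdeman2011} (via Matveev's and Yu's theorems), or ineffectively of the $S$-unit-equation/subspace-theorem results; it does not follow by iterating the two-term case. In short, your proposal correctly locates the crux --- and honestly flags it --- but the proposed bridge across it fails, so the hard regime with $\card{P} \ge 2$ and $k \ge 3$ terms remains unproved.
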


On the other hand, for infinite $P$ the diameter of $C_P$ may be finite.  The
\emph{ternary Goldbach conjecture} states that every odd integer $n$ greater
than $5$ can be written as the sum of three primes.  Helfgott's proof
\cite[Main Theorem]{helfgott} of this implies if $\sP$ is the set of all primes,
then $C_{\sP}$ is at most  $4$. 

\begin{theorem}\label{thm:diameter}
Let $\sP$ be the set of all primes.  The diameter of $C_\sP$ is $3$ or $4$.  
\end{theorem}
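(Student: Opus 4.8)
The plan is to bound the diameter of $C_\sP$ from above and below separately, exploiting that $C_\sP$ is a Cayley graph of the abelian group $\ZZ$. Since $S_\sP$ is symmetric, $s \in S_\sP$ if and only if $-s \in S_\sP$, so $d_{S_\sP}(a,b) = \ell_\sP(b-a)$ depends only on $b - a$, and reversing the signs of a geodesic word shows $\ell_\sP(-n) = \ell_\sP(n)$. Hence the diameter equals $\sup_{n > 0}\ell_\sP(n)$, and it suffices to understand $\ell_\sP$ on positive integers. Throughout I would use that $\ell_\sP(n) \le k$ means exactly that $n$ is a signed sum of at most $k$ prime powers, where the convention $p^0 = 1$ puts $\pm 1$ into $S_\sP$.

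For the upper bound, I would show $\ell_\sP(n) \le 4$ for every $n$ by splitting on parity and invoking Helfgott's ternary Goldbach theorem. If $n$ is odd with $n > 5$, then $n = p_1 + p_2 + p_3$ is a sum of three primes, so $\ell_\sP(n) \le 3$. If $n$ is even with $n \ge 8$, then $n - 1$ is odd and exceeds $5$, whence $n - 1 = p_1 + p_2 + p_3$ and $n = 1 + p_1 + p_2 + p_3$ writes $n$ as a signed sum of four prime powers (using $1 = 2^0 \in S_\sP$), so $\ell_\sP(n) \le 4$. The finitely many small cases $n \le 6$ are checked by hand, each being a prime power or a sum of two prime powers. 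Thus $\ell_\sP(n) \le 4$ for all $n$, so $\operatorname{diam}(C_\sP) \le 4$.

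For the lower bound I only need a single integer of length at least $3$, that is, an integer that is neither a prime power nor a signed sum of two prime powers. This is exactly where the covering-congruence input enters: by the result of Cohen and Selfridge \cite[Theorem~2]{cohen-selfridge}, there is an explicit arithmetic progression of integers, none of which has the form $\pm p^a \pm q^b$ with $p, q$ prime and $a, b \ge 0$. For any such $n$ one has $n \ne \pm p^a$ and $n \ne \pm p^a \pm q^b$, so no word of length $\le 2$ represents $n$ in $C_\sP$, giving $\ell_\sP(n) \ge 3$ and hence $\operatorname{diam}(C_\sP) \ge 3$. Combining the two bounds yields that the diameter is $3$ or $4$.

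The main obstacle is the lower bound. The upper bound is essentially mechanical once Helfgott's theorem is available: the only content is the parity bookkeeping and a handful of trivial small cases. By contrast, exhibiting even one integer of length $\ge 3$ is genuinely delicate, since every small integer turns out to be a signed sum of at most two prime powers, and ruling this out for all representations simultaneously is precisely what the Cohen--Selfridge covering system accomplishes. The one point needing care is to verify that their notion of ``not a sum or difference of two prime powers'' really excludes every length-$\le 2$ representation in $C_\sP$ — in particular the degenerate unit terms $\pm 1 = \pm p^0$ and the single-term (prime power) case — so that the conclusion $\ell_\sP(n) \ge 3$ is secured rather than merely $n \ne \pm p^a \pm q^b$ for $a,b \ge 1$.
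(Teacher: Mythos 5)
Your overall strategy matches the paper's: Helfgott's ternary Goldbach theorem plus parity bookkeeping gives the upper bound of $4$, and a covering-congruence result is meant to supply an integer of length at least $3$. The upper bound portion is correct. But the lower bound as written has a genuine gap, at precisely the point you flagged in your final paragraph and then left unresolved: you claim that if $n$ lies in the Cohen--Selfridge (or Sun) arithmetic progression, so that $n$ is not of the form $\pm p^a \pm q^b$, then also $n \neq \pm p^a$. That implication is false. A one-term representation is not a degenerate instance of a two-term one: you cannot take $q^b = 0$, and padding as $n = p^a + 1 - 1$ uses three generators. Concretely, the paper shows that Sun's progression $x \equiv M \pmod{N}$, with $M = 47867742232066880047611079$ and $N = 66483084961588510124010691590$, has the property that its first two elements $M$ and $M + N$ are \emph{prime}; they satisfy Sun's conclusion, yet $\length_\sP(M) = \length_\sP(M+N) = 1$. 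So membership in the progression only rules out $\length_\sP(n) = 2$; it does not by itself produce an integer of $\sP$-length $3$.

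Closing this gap is the actual content of the paper's lower bound: the authors run ECPP primality proving (in \magma) along the progression and verify that $M + 2N = 23\cdot 299723\cdot 19295212676140402555471$ has more than one distinct prime factor, hence is not a prime power, whence $\length_\sP(M+2N) = 3$. If you wanted to avoid explicit computation, a density argument would also repair your proof: prime powers have density zero, while the progression $\set{M + kN \given k \geq 0}$ has positive density, so some element of the progression is not a prime power, and any such element has $\sP$-length exactly $3$. Either way, some input beyond the Cohen--Selfridge/Sun theorem is required, and your proposal as it stands does not contain it.
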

\begin{proof}
It is easy to see that $\length_\sP(n) = 1$ for $n \in \set{1, 2, 3, 4, 5}$.
Helfgott \cite[Main Theorem]{helfgott} proves that every odd integer greater
than  $5$ can be written as the sum of three primes.  Since every even integer
greater than $4$ can be expressed as $1$ less than an odd integer greater than
$5$, we have that $\length_\sP(n) \leq 4$ for all $n \in \ZZ$.   

Since not every integer is a prime power, the diameter of  $C_\sP$ is at least
$2$.  To show that the diameter is not $2$, it suffices to produce an integer
that is not a prime power and cannot be expressed as the sum or difference of
prime powers, where the prime power $p^0 = 1$ is allowed.   Such integers are
surprisingly hard to find.  First note that the Goldbach conjecture asserts that
every even integer greater than $2$ can be expressed as the sum of two primes.
This has been computationally verified integers less than $4 \cdot 10^{18}$
\cite{silva-herzog-pardi}.  It follows that $\ell_\sP(n) \leq 2$ for even
integers $n < 4 \cdot 10^{18}$.  Thus a search for an integer of $P$-length $3$
should be restricted to odd integers.  An odd integer $M$ is $P$-length $3$ if 
\begin{enumerate}
\item $M$ is not prime power; 
\item $\abs{M \pm 2^n}$ is not prime power for all $n \ge 0$. \label{it:pp}
\end{enumerate}

Cohen and Selfridge \cite[Theorem 2]{cohen-selfridge} use covering congruences
to prove the existence of an infinite family of integers $M$ satisfying item
\eqref{it:pp} and give an explicit  94-digit example of such an integer.
 Sun \cite{sun-2000} adapts their work  to produce a much smaller
example.  Specifically, let 
\[M = 47867742232066880047611079 ,\quad \text{and let} \quad N =
66483084961588510124010691590.\]   Sun proves that if $x \equiv  M\bmod{N}$, 
then $x$ is not of the form $\abs{p^a \pm q^b}$ for any primes $p,q$ and
nonnegative integers $a,b$\footnote{The modulus $N$ given by Sun \cite{sun-2000}
  is incorrectly written as $66483034025018711639862527490$.}.   
 We use  Atkin and Morain's ECPP (Elliptic Curve Primality Proving) method
 \cite{atkin-morain} implemented by Morain in \magma\ \cite{magma} to look in
 this congruence class for an element that is provably not a prime power.   We
 find that $M$ and $M + N$ are prime, but  
\begin{align*}
M + 2N &= 133014037665409087128068994259\\
&= 23\cdot 299723\cdot 19295212676140402555471
\end{align*}
is not a prime power.  Thus $\length_\sP(M + 2N) = 3$, and the result follows.
\end{proof}

\begin{remark}
Assuming Goldbach's conjecture, the diameter of $C_\sP$ is $3$.  
\end{remark}

It is still an open problem to find the smallest integer  $n$ that is not of the
form $\abs{p^a \pm q^b}$, for any primes $p,q$ and nonnegative integers $a,b$
\cite[A19]{guy}.    Explicit computations \cite{cohen-selfridge, sun-2000} show
that the smallest such integer must be larger than $2^{25}$.  Such elements, if
not prime powers, would have $\sP$-length $3$.  We have extended slightly their
computation and confirmed that $\length_\sP(n) < 3$ for all $n < \num{58164433}
\approx 2^{25.79}$.   For  
\[n = 58164433  = 4889 \cdot 11897,\]
we could not show $\length_\sP(n) = 2$.  It is possible that this integer is the
smallest positive integer of $\sP$-length $3$. 

\begin{corollary} Let $P$ be a subset of the natural numbers containing all but finitely many primes. Then, $C_P$ has finite diameter. 
\end{corollary}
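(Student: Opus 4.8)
The plan is to piggyback on Theorem~\ref{thm:diameter} and reduce the whole statement to one elementary observation: a prime that is \emph{missing} from $P$ is a single fixed integer, and hence contributes only a bounded amount to any $P$-length. Write $F = \sP \setminus P$ for the set of primes not in $P$; by hypothesis $F$ is finite. Since $P$ still contains infinitely many primes it is nonempty, so for any $q \in P$ we have $\pm 1 = \pm q^0 \in S_P$; and because $C_P$ is a Cayley graph it is vertex-transitive, so its diameter equals $\sup_{n \in \ZZ} \ell_P(n)$. I then set $C = \max_{f \in F} \ell_P(f)$, with the convention $C = 1$ if $F = \emptyset$. This constant is finite, since each $f \in F$ is one fixed integer and $\ell_P(f) \leq f < \infty$ (write $f$ as a sum of $f$ copies of $1 \in S_P$).

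The key step treats an odd integer $N > 5$ exactly as in the proof of Theorem~\ref{thm:diameter}. By Helfgott \cite{helfgott}, $N = q_1 + q_2 + q_3$ is a sum of three primes. Crucially, I make no attempt to control \emph{which} primes appear; instead I observe that each $q_j$ is either a prime in $P$, in which case $q_j \in S_P$ and $\ell_P(q_j) = 1$, or a prime in $F$, in which case $\ell_P(q_j) \leq C$ by the definition of $C$. Subadditivity of $\ell_P$ (the triangle inequality for $d_{S_P}$, using translation invariance) then gives $\ell_P(N) \leq \ell_P(q_1) + \ell_P(q_2) + \ell_P(q_3) \leq 3C$.

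It remains to dispatch the other integers. For even $N > 4$, the integer $N + 1$ is odd and exceeds $5$, so $N = (N+1) + (-1)$ with $-1 \in S_P$ yields $\ell_P(N) \leq \ell_P(N+1) + 1 \leq 3C + 1$. For $\abs{N} \leq 5$ there are only finitely many values, each with $\ell_P(N) \leq \abs{N} \leq 5 < \infty$, and $\ell_P(-N) = \ell_P(N)$ takes care of negatives throughout. Combining the cases gives the uniform bound $\ell_P(n) \leq 3C + 5$ for every $n \in \ZZ$, whence $C_P$ has diameter at most $3C + 5 < \infty$.

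The only content beyond Theorem~\ref{thm:diameter} is the observation driving the key step, and this is also where the single pitfall lies: one should resist the temptation to force Helfgott's decomposition to avoid the primes of $F$, which would require a quantitative Vinogradov-type count of the representations of $N$ as a sum of three primes. That effort is unnecessary — a forbidden prime is allowed to occur, but since $F$ is finite every such prime is one of finitely many fixed integers of a priori bounded $P$-length, so its appearance costs at most the uniform constant $C$. The bookkeeping in the even and small-value cases is entirely routine, so I expect no real obstacle once the main step is in place.
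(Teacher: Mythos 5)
Your proposal is correct and follows essentially the same route as the paper: bound the $P$-length of each of the finitely many missing primes by a constant, apply Helfgott's ternary Goldbach theorem to write any odd integer $>5$ as a sum of three primes, use subadditivity of $\ell_P$, and handle even integers by shifting by $1$. If anything, your write-up is slightly more careful than the paper's, since you justify finiteness of the constant (via $1 = q^0 \in S_P$) and note that primes remaining in $P$ have length exactly $1$, whereas the paper redundantly invokes Goldbach for those primes as well.
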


\begin{proof}
It is enough to consider the case $P=\sP \setminus S$, where $S$ is finite. 
Let $R = \max_{p \in S}
\set{\length_{\sP'}(p)}$.

First note that if $p$ is a prime in $S$, then $\length_{\sP'}(p) \leq R$.  If
$p$ is a prime not in $S$, then by the ternary Goldbach conjecture \cite[Main
  Theorem]{helfgott} we have $\length_{\sP'}(p) \leq 3$.  Thus
$\length_{\sP'}(p) \leq \max\set{R,3}$ for any prime $p$. 

Since every even integer is one less than an odd integer, it suffices to show
that $\length_{\sP'}(n) \leq 3\max\set{R,3}$ for every positive odd integer
$n$. By the ternary Goldbach conjecture, every odd integer $n > 5$ can be
expressed as the sum of three primes.  Let $n = p + q + r > 5$ be an odd integer
for some primes $p, q, r$.  Then  
\[
\length_{\sP'}(n) \leq \length_{\sP'}(p) + \length_{\sP'}(q) + \length_{\sP'}(r)
\leq \max\set{R,3} + \max\set{R,3} + \max\set{R,3},\]
and the result follows.
\end{proof}

\begin{thebibliography}{10}

\bibitem{vasilevska-exploring}
D.~M. Adams, D.~L. Gulbrandsen, and V.~Vasilevska, \emph{Exploring properties
  of {C}ayley graphs of the integers with infinite generating sets}, Ball State
  Undergraduate Mathematics Exchange \textbf{10} (2016), no.~1, 40--50.

\bibitem{atkin-morain}
A.~O.~L. Atkin and F.~Morain, \emph{Elliptic curves and primality proving},
  Math. Comp. \textbf{61} (1993), no.~203, 29--68.

\bibitem{magma}
W.~Bosma, J.~Cannon, and C.~Playoust, \emph{The {M}agma algebra system. {I}.
  {T}he user language}, J. Symbolic Comput. \textbf{24} (1997), no.~3-4,
  235--265, Computational algebra and number theory (London, 1993).

\bibitem{cohen-selfridge}
F.~Cohen and J.~L. Selfridge, \emph{Not every number is the sum or difference
  of two prime powers}, Math. Comp. \textbf{29} (1975), 79--81, Collection of
  articles dedicated to Derrick Henry Lehmer on the occasion of his seventieth
  birthday.

\bibitem{guy}
R.~K. Guy, \emph{Unsolved problems in number theory}, second ed., Problem Books
  in Mathematics, Springer-Verlag, New York, 1994, Unsolved Problems in
  Intuitive Mathematics, I.

\bibitem{hajdu-tijdeman2011}
L.~Hajdu and R.~Tijdeman, \emph{Representing integers as linear combinations of
  powers}, Publ. Math. Debrecen \textbf{79} (2011), no.~3-4, 461--468.

\bibitem{helfgott}
H.~{Helfgott}, \emph{{The ternary Goldbach problem}}, ArXiv e-prints (2015),
  arXiv:1501.05438.

\bibitem{nathanson-diameter}
M.~B. Nathanson, \emph{Geometric group theory and arithmetic diameter}, Publ.
  Math. Debrecen \textbf{79} (2011), no.~3-4, 563--572.

\bibitem{nathanson-gadic}
M.~B. Nathanson, \emph{Problems in additive number theory, {IV}: {N}ets in
  groups and shortest length {$g$}-adic representations}, Int. J. Number Theory
  \textbf{7} (2011), no.~8, 1999--2017.

\bibitem{silva-herzog-pardi}
T.~Oliveira~e Silva, S.~Herzog, and S.~Pardi, \emph{Empirical verification of
  the even {G}oldbach conjecture and computation of prime gaps up to {$4\cdot
  10^{18}$}}, Math. Comp. \textbf{83} (2014), no.~288, 2033--2060.

\bibitem{sun-2000}
Z.-W. Sun, \emph{On integers not of the form {$\pm p^a\pm q^b$}}, Proc. Amer.
  Math. Soc. \textbf{128} (2000), no.~4, 997--1002.

\end{thebibliography}
\providecommand{\bysame}{\leavevmode\hbox to3em{\hrulefill}\thinspace}
\providecommand{\MR}{\relax\ifhmode\unskip\space\fi MR }
\providecommand{\MRhref}[2]{%
  \href{http://www.ams.org/mathscinet-getitem?mr=#1}{#2}
}
\providecommand{\href}[2]{#2}

\end{document}